%% daj-template.tex v0.33     23 Sep 2016   Alex Russell/Laszlo Babai
%%
%% AUTHOR: Fill in fields (or see warnings) below marked with "AUTHOR"
%% ** Add as few macro / package definitions as possible
%% ** Compile with "pdflatex"; make sure that
%%           daj.cls and tocbase.cls are in the same directory.
%%
%% EDITOR: Fill in fields below marked with "EDITOR"
%%    and check that authors proprely filled in field marked with "AUTHOR"

\documentclass{daj}

\usepackage{amsmath, graphicx,hyperref}
\usepackage{amsthm,latexsym, amsfonts,amssymb,amstext}
\usepackage{fullpage} 
\usepackage{changes}
\usepackage[margin=20pt,
font+=small,labelformat=parens,labelsep=space,
skip=6pt,list=false,hypcap=false
]{subcaption}
\usepackage{wrapfig}

\newtheorem{theorem}{Theorem}

\newtheorem{lemma}[theorem]{Lemma}

\newtheorem{definition}[theorem]{Definition}

\newcommand{\Ex}[2]{\mathop{\mathbb{E}}\displaylimits_{#1}\left
	[ #2 \right ]}
\newcommand{\Expect}[1]{\mathop{\mathbb{E}}\left
	[ #1 \right ]}

%\newcommand{\bgd}[1]{\colorbox{green!30}{\parbox{0.9\textwidth}{#1}}}

%%%%%%%%%%%%%%%%%%%%%%%%%%%%%%%%%%%%%%%%%%%%%%%%
%% AUTHOR: Fill in meta-data below:
\dajAUTHORdetails{%
  title = {Coding for Sunflowers}, %% please capitalize all significant words
  author = {Anup Rao},
    %% Please use the format for commas as follows:
    %% "A", or "A and B", or "A, B, and C", or "A, B, C, and D", etc.
  plaintextauthor = {Anup Rao},
    %% An author list in plain text: Use the format
    %% "A", or "A, B", or "A, B, C", etc.
    %% NOTE: No LaTeX code in author names.
    %% NOTE: No "and" at the end--simply comma separated,
    % 
 %% The remaing lines in this section are optional:
    %
    %% IF YOUR TITLE CONTAINS MATH OR LATEX such as accented characters: 
    %% Add a "plain text title";  otherwise comment out the next line:
  plaintexttitle = {Coding for Sunflowers}, %%  title without math or LaTeX
    %
    %% ONLY IF YOUR TITLE IS TOO LONG to fit in the page headers, please 
    %% add an abbreviated version of the title, otherwise comment it out:
  runningtitle = {Coding for Sunflowers}, 
    %
    %% ONLY IF YOUR AUTHOR LIST IS TOO LONG to fit in the page headers, 
    %% add an abbreviated version, otherwise comment it out:
  runningauthor = {Anup Rao},
    %% you can replace first names and/or middle names with initials.
    %
    %% ONLY IF YOUR AUTHOR LIST IS TOO LONG to fit the copyright entry
    %% on the bottom of the front page,
    %% add an abbreviated version, otherwise comment it out:
  copyrightauthor = {A. Rao},
    %% Note that the copyrightauthor  field will seldom be necessary;
    %% for instance, in this example with four authors, it would be 
    %% all right to comment it out and have all authors' full names 
    %% appear on the Copyright line
   %
   %% Include keywords of your choice: comma separated, lower case;
   %% comment out the "keywords" line if you don't wish to provide them
  keywords = {sunflower, combinatorics},
}   %%% END \dajAUTHORdetails

%%%%%%%%%%%%%%%%%%%%%%%%%%%%%%%%%%%%%%%%%%%%%%%%
%%% EDITOR: please fill in the following data:
\dajEDITORdetails{%
   year={2020},
   %volume={XX},
   number={2},
   received={25 September 2019},   % received date: example: 7 January 2017
   revised={3 January 2020},    % Optional revised date (you may comment it out)
   published={25 February 2020},  % published date
   doi={10.19086/da.11887},       % XXX = number of paper, e.g. da006 for paper#6
%                              % or  da0006 (length of string arbitrary)
}   %%% END \dajEDITORdetails

\begin{document}

\begin{frontmatter}[classification=text]
%% EDITOR: this will force the keywords to appear right after the Abstract.
%%   If the abstract is too long and would force the keywords off the
%%   front page, please comment out % [classification=text] above
%%   This way the keywords will be floated on the bottom of the first page
%%   even though the Abstract spills over to the next page.

%%% AUTHOR: Title goes here.  This line is optional.  You must use it
%%   if title has footnote attached or requires nontrivial typesetting,
%%   e.g., inclusion of linebreaks to force nice layout.
\title{Coding for Sunflowers} %% please capitalize all significant words

%%% AUTHOR:
%%% List all authors. If you wish, place grant acknowledgements in \thanks.
%%% In brackets include a short tag for each author.
\author[arao]{Anup Rao}
%\author[johan]{Johan H{\aa}stad\thanks{Supported by...}}
%\author[laci]{L\'aszl\'o Lov\'asz\thanks{Supported by...}}
%\author[andy]{Andrew Chi-Chih Yao\thanks{Supported by...}}

%%% AUTHOR: Abstract goes here
\begin{abstract}
A sunflower is a family of sets that have the same pairwise intersections. We simplify a recent result of Alweiss, Lovett, Wu and Zhang that gives an upper bound on the size of every family of sets of size $k$ that does not contain a sunflower. We show how to use the converse of Shannon's noiseless coding theorem to give a cleaner proof of a similar bound.
\end{abstract}
\end{frontmatter}

%%% AUTHOR: body of paper starts here
\section{Introduction}
A $p$-\emph{sunflower} is a family of $p$ sets whose pairwise intersections are identical. How large can a family of sets of size $k$ be if the family does not contain a $p$-sunflower? Erd\H{o}s and Rado \cite{ErdosR60} were the first to pose and answer this question. They showed that any family with more than $(p-1)^k \cdot k!$ sets of size $k$ must contain a $p$-sunflower. This fundamental  fact has many applications in mathematics and computer science \cite{ErdosS89, Razborov85,FrandsenMS97,GalM07,GopalanMR13,Rossman14,RamamoorthyR18, LovettZ19,LovettSZ19}.

After nearly $60$ years, the correct answer to this question is still not known. There is a  family  of $(p-1)^k$ sets of size $k$ that does not contain a $p$-sunflower, and Erd\H{o}s and Rado conjectured that their lemma could be improved to show that this is essentially the extremal example.  Recently, Alweiss, Lovett, Wu and Zhang \cite{AlweissLWZ19} made substantial progress towards resolving the conjecture. They showed that $(\log k)^k \cdot (p\log \log k)^{O(k)}$ sets ensure the presence of a $p$-sunflower. Subsequently, Frankston, Kahn, Narayanan and Park \cite{FrankstonKNP19}  improved the counting methods  developed in \cite{AlweissLWZ19} to prove a conjecture of Talagrand \cite{Talagrand10} regarding monotone set systems. 

In this work, we give simpler proofs for  these results. Our proofs rely on an encoding argument  inspired by a similar encoding argument used in \cite{AlweissLWZ19, FrankstonKNP19}. The main novelty is our use of Shannon's noiseless coding theorem \cite{Shannon48, Kraft49} to reason about the efficiency of the encoding, which turns out to avoid complications that show up when using vanilla counting.  We show:

\begin{theorem}\label{theorem:sunflower} There is a universal  constant $\alpha >1$ such that every family of more than $(\alpha  p \log(pk))^k$ sets of size $k$  must contain a $p$-sunflower. 
\end{theorem}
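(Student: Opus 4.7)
The plan is to prove the contrapositive by induction on $k$, organized around the notion of an $r$-\emph{spread} family: $\mathcal{F}$ is $r$-spread if every nonempty $Y$ is contained in at most $|\mathcal{F}|/r^{|Y|}$ members of $\mathcal{F}$. The inductive step handles the non-spread case for free: if some nonempty $Y$ witnesses a failure of $r$-spreadness, then $\{S\setminus Y : Y\subseteq S\in\mathcal{F}\}$ is a family of $(k-|Y|)$-sets in the smaller ground set $U\setminus Y$ that still exceeds the inductive threshold (since $\log(p(k-|Y|))\le \log(pk)$), so the inductive hypothesis yields a $p$-sunflower in it, and adjoining $Y$ to every petal gives a $p$-sunflower in $\mathcal{F}$. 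Setting $r = \alpha p\log(pk)$, the whole problem reduces to the key lemma that $r$-spread families of $k$-sets contain $p$ pairwise disjoint sets.

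For the key lemma I would argue probabilistically. Sample $p$ disjoint subsets $W_1,\ldots,W_p$ of $U$ by independently placing each element of $U$ into one of $p$ bins uniformly at random; each $W_i$ then has inclusion probability $1/p$. If for every $i$ there is some $S_i\in\mathcal{F}$ with $S_i\subseteq W_i$, the $S_i$'s are automatically pairwise disjoint and form the desired sunflower. By a union bound, it is enough to show that a single random $W$ with inclusion probability $1/p$ satisfies $\Pr[\nexists S'\in\mathcal{F}:\, S'\subseteq W] < 1/p$.

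This last estimate is the crux, and I would establish it through an encoding argument wrapped around the converse to Shannon's noiseless coding theorem. Let $S$ be uniform on $\mathcal{F}$, so that any prefix-free encoding of $S$ (allowed to use $W$ and any shared public randomness) has expected length at least $H(S)=\log|\mathcal{F}|$. I would design such a code that walks through the elements of $U$ in a carefully chosen random order, using arithmetic coding to reveal, element by element, whether that element lies in $S$; the $r$-spread property guarantees that each confirmed element of $S$ shrinks the set of candidate members of $\mathcal{F}$ by a factor of $r$, so each such confirmation costs only about $(\log r)/r$ bits on average. Whenever $W$ happens to contain some $S'\in\mathcal{F}$, that $S'$ can be used as a ``template'' against which $S$ is compared, further shortening the code. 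Comparing the resulting expected length to $\log|\mathcal{F}|$ and rearranging forces the bad event ``no $S'\subseteq W$'' to occur with probability at most $1/p$, provided $r\ge Cp\log(pk)$ for a large enough constant $C$.

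The main obstacle I anticipate is designing the code so that (i) the $r$-spread condition translates cleanly into a per-element entropy bound, and (ii) the savings from $\{\exists S'\subseteq W\}$ genuinely materialize in expectation rather than only on lucky realizations. This is precisely where Shannon's converse pays off over vanilla counting: instead of case analyses and worst-case bounds over which elements are ``guessed correctly,'' we compare expected code lengths, and the chain rule for entropy together with linearity of expectation absorb the messy bookkeeping. Once the code and its coupling to $W$ are pinned down, the final numerical bound $(\alpha p\log(pk))^k$ should fall out of a short calculation.
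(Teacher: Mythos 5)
Your reduction to the spread case and the probabilistic argument for extracting $p$ pairwise disjoint sets are both sound and essentially identical to the paper's. You use a union bound over the $p$ blocks where the paper uses linearity of expectation applied to its auxiliary quantity $|\chi(X,W_i)|$; both routes are valid, and both rest on the same single-block estimate: a random set $W$ of density $\Theta(1/p)$ contains some member of the family with probability $>1-1/p$.

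The genuine gap is in the crux, where you gesture at an encoding rather than giving one. Two things are missing that the paper cannot do without. First, the argument is necessarily \emph{iterative}: the paper fixes an auxiliary function $\chi(X,W)$ (the smallest residue $S_y\setminus W$ over $y$ with $S_y\subseteq S_X\cup W$) and proves by induction on $m$ that if $|W|\gtrsim m\,n/r$ then $\Ex{}{|\chi(X,W)|}\le k\,(2/3)^m$; one needs $m\approx\log(k/\epsilon)$ refinement rounds, each decomposing $W=U\cup V$ and gaining a constant factor. A single-shot code, as you sketch, can at best give a constant-factor reduction, not the exponential decay from $k$ down to $\epsilon$. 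Second, the object encoded is the \emph{pair} $(V,X)$, not $S$ (equivalently $X$) alone with $W$ as side information: the paper compares the code length to $\log\bigl(r^k\binom{n-u}{v}\bigr)$, and the gain per round comes from a two-case analysis (whether the ``competitor count'' $\tau(A,X,V)$ is small or large) that hinges on $V$ being part of what is encoded. Your proposed per-element arithmetic coding with a ``template'' $S'\subseteq W$ is plausible-sounding but you have not shown how $r$-spreadness yields the claimed $\approx(\log r)/r$ bits per confirmed element, nor how the savings from $\{\exists S'\subseteq W\}$ are realized in expectation rather than only on good realizations — and you explicitly flag this as the unresolved obstacle. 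So the framework is right, but the heart of the proof (the code and its analysis, including the iteration via $\chi$) is absent.
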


Let $r(p,k)$ denote the quantity $\alpha p \log(pk)$. We say\footnote{A similar concept was first used by Talagrand \cite{Talagrand10}.} that a sequence\footnote{Here we state the results for sequences of sets because some applications require the ability to reason about sequences that may repeat sets.} of sets $S_1,\dotsc, S_\ell \subset [n]$ of size $k$ is $r$-spread if for every non-empty set $Z \subset [n]$,  the number of elements of the sequence  that contain $Z$ is at most $r^{k-|Z|}$. We prove that for an appropriate choice of $\alpha$, the following lemma holds:
\begin{lemma}\label{lemma:main}
	If a sequence of more than $r(p,k)^k$ sets of size $k$ is $r(p,k)$-spread, then the sequence must contain $p$ disjoint sets.
\end{lemma}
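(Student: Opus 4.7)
The plan is to argue by contradiction using Shannon's noiseless coding theorem. Suppose a sequence of $\ell > r^k$ sets of size $k$ (where $r = r(p,k) = \alpha p \log(pk)$) is $r$-spread but contains no $p$ pairwise disjoint sets. A uniformly random element $S$ of the sequence then has entropy $\log \ell > k \log r$, so every prefix-free code for $S$ has expected length strictly greater than $k \log r$; the goal is to produce an encoding of expected length at most $k \log r$, giving the contradiction.

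The first step is to extract a combinatorial obstruction: greedily pick disjoint sets from the sequence until no further disjoint set can be added, producing at most $p-1$ sets in the sequence whose union $Y$ meets every set in the sequence. In particular $|Y| \leq (p-1)k$ and every $S$ in the sequence satisfies $S \cap Y \neq \emptyset$.

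The encoding would use shared randomness (say a uniformly random permutation of $[n]$) and reveal the elements of $S$ one at a time in that order. A naive scheme that at each step pays $\log r$ bits via $r$-spreadness reaches exactly $k \log r$ bits, which is not enough; the savings must come from $Y$. The intuition is that whenever the next element to reveal lies in a (residual version of) $Y$, it can be encoded using roughly $\log |Y| \leq \log((p-1)k)$ bits instead of $\log r$ bits, saving $\Theta(\log k)$ bits, since $r = \Theta(p \log(pk))$. Iterating the greedy extraction on the conditional subfamily after each revealed element (which also contains no $p$ disjoint sets) produces a fresh residual blocker, so such savings can be harvested repeatedly. Compounded over $k$ steps this should provide $\Theta(k \log k)$ bits of slack, which matches the gap between the trivial bound $r > (p-1)k$ and the claimed bound $r = \Theta(p \log(pk))$.

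The main obstacle I anticipate is making the iteration precise in expectation. One must verify that after conditioning on a prefix of revealed elements, the conditional family remains $r$-spread enough for the next spreadness step, and that the residual family either still misses $p$ disjoint sets (so a new small blocker exists) or is already empty (so the prefix has determined $S$). Shannon's theorem is exactly the right tool here, because summing expected bit-lengths across the iterated steps is the natural thing to do with entropy, whereas a vanilla counting bound would have to track worst-case branching at every step; balancing these expected costs against $k \log r$ and solving for the implicit constant is the technical crux that fixes the universal $\alpha > 1$.
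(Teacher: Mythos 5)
Your proposal identifies the right high-level tool (the converse of Shannon's coding theorem, applied to a uniform random element of the sequence), but the specific encoding you describe has the inequality going the wrong way and, even if made precise, would only recover the Erd\H{o}s--Rado bound rather than the improved one.

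The core problem is the ``savings'' claim. You say that revealing an element of the blocker $Y$ costs $\log|Y|\le\log((p-1)k)$ bits, which you describe as a saving of $\Theta(\log k)$ over $\log r$. But with $r = \alpha p\log(pk)$, we have $(p-1)k > r$ for all but very small $k$, so $\log((p-1)k) > \log r$: paying per revealed element via a blocker is a \emph{loss}, not a gain. Concretely, if you reveal $j$ elements through successive blockers and then pay $(k-j)\log r$ via spreadness, the total is
$\sum_{i=0}^{j-1}\log\bigl((p-1)(k-i)\bigr) + (k-j)\log r$,
and every term with the blocker costs more than the corresponding $\log r$ term. Taking $j=k$ gives $\log\bigl((p-1)^k\,k!\bigr)$, which is exactly the entropy budget the Erd\H{o}s--Rado argument would need, and taking $j=1$ gives $\log((p-1)k)+(k-1)\log r > k\log r$ --- no contradiction in either case. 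So iterating the greedy blocker extraction cannot harvest $\Theta(k\log k)$ of \emph{slack}; it produces $\Theta(k\log k)$ of \emph{overrun}. The $\Theta(k\log k)$ gap you correctly identify between $k\log((p-1)k)$ and $k\log r$ is precisely the gap this encoding fails to close.

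The paper's route is structurally different and this is not incidental. Rather than encoding a random member $S$ directly, the paper first proves a stronger technical statement (Lemma~\ref{lemma:code}): for a uniformly random small set $W$ of density $\gamma$ and a random index $X$, the residual quantity $|\chi(X,W)|$ (how much of some set $S_y\subseteq S_X\cup W$ sticks out of $W$) has tiny expectation. Lemma~\ref{lemma:main} then follows from a random $p$-partition and linearity of expectation --- no greedy blockers at all. The Shannon argument is applied to a prefix-free encoding of the \emph{pair} $(V,X)$ (with $W=U\cup V$), not of $S$ alone, and the savings come from a two-case split: either a certain index set $\tau(A,X,V)$ is small (so $X$ is cheap to encode once $W\cup\chi(X,U)$ and an auxiliary small set $A$ are fixed), or it is large, in which case $V$ itself is highly constrained. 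It is the interplay between the random set $W$ absorbing elements and the $r$-spread bound on the index counts that produces the multiplicative $(2/3)$-per-stage decay in $\mathbb{E}|\chi(X,W)|$, and this mechanism has no analogue in a direct element-by-element blocker encoding of $S$. Without introducing the random covering set $W$, or something playing the same role, the entropy accounting simply does not beat $\log\bigl((p-1)^k k!\bigr)$.
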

As far as we know, it is possible that Lemma~\ref{lemma:main} holds even when $r(p,k) = O(p)$. Such a strengthening of Lemma \ref{lemma:main} would imply the sunflower conjecture of Erd\H{o}s and Rado. Lemma \ref{lemma:main} easily implies Theorem \ref{theorem:sunflower}:   we proceed by induction on $k$. When $k = 1$, the theorem holds, since the family contains $p$ distinct sets of size $1$. For $k >1 $, if the sets are not $r$-spread, then there is a non-empty set $Z$ such that more than $r^{k-|Z|}$ of the sets contain $Z$. By induction, and since $r(p,k)$ can only increase with $k$, the family of sets contains a $p$-sunflower. Otherwise, if the sets are $r$-spread, Lemma \ref{lemma:main} guarantees the presence of a $p$-sunflower. 

It only remains to prove Lemma \ref{lemma:main}. In fact, we prove something much stronger: a small random set is very likely to contain \emph{some} set of an $r$-spread family of sets.
%
%	\begin{figure}[t]
%	\centering
%	\includegraphics[width = 0.4 \textwidth]{sunflowerpic.pdf}
%	\caption{A $3$-sunflower.}
%\end{figure}

\section{Random sets and $r$-spread families}
To prove Lemma \ref{lemma:main}, we need to understand the extent to which a small random set $W \subseteq [n]$ contains some set of a large family of sets of size $k$. To that end, it is convenient to use the following definition: 
\begin{definition}Given $S_1,\dotsc, S_\ell \subseteq [n]$, for $x \in [\ell]$ and $W \subseteq [n]$, let $\chi(x,W)$  be equal to $S_y \setminus W$, where $y \in [\ell]$ is chosen to minimize $|S_y \setminus W|$ among all choices with $S_y \subseteq S_x \cup W$. If there are multiple choices for $y$ that minimize $|S_y \setminus W|$, let $y$ be the smallest one. 
\end{definition}
Observe that the definition makes sense even if $S_1, \dotsc, S_\ell$ are not all distinct. When $U \subseteq W$, we have  $|\chi(x,U)| \geq |\chi(x,W)|$. We always have $\chi(x,W) \subseteq S_x$. Moreover, $\chi(x,W) = \emptyset$ if and only if there is an index $y$ for which $S_y \subseteq W$. Our main technical lemma shows that if a long sequence of sets is $r$-spread, then $|\chi(X,W)|$ is likely to be small for a random $X$ and a random small set $W$:

\begin{lemma}\label{lemma:code}
	There is a universal constant $\beta>1$ such that the following holds. Let $0 <\gamma,\epsilon<1/2$. If $r= r(k,\gamma,\epsilon) = \beta \cdot (1/\gamma)  \cdot \log(k/\epsilon)$,  and  $S_1,\dotsc, S_\ell \subseteq [n]$ is an  $r$-spread sequence of  at least $r^k$ sets of size $k$, $X \in [\ell]$ is  uniformly random, and $W \subseteq [n]$ is a uniformly random set of size at least  $\gamma n$ independent of $X$, then $\Expect{|\chi(X,W)|} < \epsilon.$ In particular, $\Pr_W[\exists y, S_y \subseteq W] > 1- \epsilon$.
\end{lemma}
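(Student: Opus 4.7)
The plan is to apply the converse of Shannon's noiseless coding theorem to an encoding of $X$ with $W$ as side information. Since $X$ is uniform on $[\ell]$ with $\ell \geq r^k$ and is independent of $W$, we have $H(X \mid W) = \log \ell \geq k \log r$, so any prefix-free code for $X$ that the decoder (knowing $W$) can invert has expected length at least $k \log r$. My aim is to construct a code whose length decreases in a controlled way with $|\chi(X,W)|$; comparing the resulting upper bound to the Shannon lower bound will then force $\mathbb{E}[|\chi(X,W)|]$ to be small.

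The $r$-spread property enters through the following structural fact, which I will use as the spine of the encoding: if $\chi \subseteq [n]$ is nonempty, then at most $r^{k - |\chi|}$ of the sets $S_{x'}$ satisfy $\chi \subseteq S_{x'}$. Since $\chi(X, W) \subseteq S_X$ (because $S_y \subseteq S_X \cup W$ forces $S_y \setminus W \subseteq S_X$), once the decoder has reconstructed $\chi = \chi(X, W)$, the index $X$ is confined to a set of size at most $r^{k - |\chi|}$ and can be specified in at most $(k - |\chi|)\log r$ bits. Thus, writing $m = |\chi(X, W)|$, the encoding consists of (i) a prefix-free integer code for $m$ in $O(\log k)$ bits, (ii) an efficient description of the set $\chi(X, W) \subseteq [n] \setminus W$, and (iii) an index for $X$ within the narrowed-down candidate set from (ii), costing $(k - m)\log r$ bits. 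The decoder, given $W$, parses the three pieces in order.

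The main obstacle is piece (ii). A naive bitmap description of $\chi$ as a subset of $[n] \setminus W$ spends $\Omega(\log n)$ bits per element, which is far too expensive since $n$ is not controlled by $r$ or $k$, and would give a trivial bound from Shannon. The technical crux, and what makes Shannon's theorem convenient here compared to the vanilla counting arguments of \cite{AlweissLWZ19, FrankstonKNP19}, is to encode $\chi$ much more efficiently by exploiting the joint randomness of $(X, W)$, for instance via a random permutation of $[n]$ shared with the decoder (which is independent of $X$ and so does not change the Shannon lower bound). Once the right encoding of $\chi$ is in place, combining the three pieces produces an expected length whose gap above $k \log r$ is controlled by $\mathbb{E}[m]$; rearranging yields $\mathbb{E}[|\chi(X, W)|] < \epsilon$, provided the constant $\beta$ in $r = \beta (1/\gamma)\log(k/\epsilon)$ is taken large enough.

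The ``in particular'' conclusion is then immediate. The event $\chi(X, W) = \emptyset$ coincides with the existence of some $y$ with $S_y \subseteq W$ (since $S_y \subseteq W$ automatically gives $S_y \subseteq S_X \cup W$), and this event depends only on $W$. Markov's inequality applied to the nonnegative integer-valued random variable $|\chi(X, W)|$ then gives $\Pr_W[\exists y : S_y \subseteq W] = \Pr[|\chi(X, W)| = 0] \geq 1 - \mathbb{E}[|\chi(X, W)|] > 1 - \epsilon$.
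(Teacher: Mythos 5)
Your high-level plan---encode $X$ prefix-freely, use the $r$-spread property to pin $X$ down to $\le r^{k-|\chi(X,W)|}$ candidates once $\chi(X,W)$ is known, and compare against the Shannon lower bound of $k\log r$---captures the right spirit, and your ``in particular'' paragraph is correct. But there is a genuine gap, and it sits exactly where you flag it: step (ii), the encoding of $\chi(X,W)$ itself. You acknowledge that a naive description costs $\Theta(m\log n)$ bits (with $m=|\chi(X,W)|$), which swamps the $m\log r$ you saved in step (iii) since $n$ is unbounded in terms of $r$ and $k$. The gesture toward a ``random permutation shared with the decoder'' is not an argument: shared randomness independent of $X$ leaves $H(X\mid W,\pi)=\log\ell$ unchanged, but it does not tell you how to describe the specific, adversarially-structured set $\chi(X,W)\subseteq[n]\setminus W$ in fewer than $\approx m\log n$ bits. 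Until that is done, the three pieces do not add up to anything below $k\log r$, and the Shannon converse yields no information.

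Two structural ideas from the paper are needed to close this, and neither appears in your proposal. First, the paper does not encode $X$ with $W$ as side information; it fixes part of $W$ (call it $U$) and prefix-freely encodes the \emph{pair} $(V,X)$ where $W=U\cup V$. Encoding $V$ and $\chi$ jointly is what makes $\chi$ cheap: one first encodes $W\cup\chi(X,U)$, which costs only $\log\binom{n-u}{v}+|\chi(X,U)|\log(n/v)$ bits (barely more than describing $V$ alone, since $\chi$ is tiny compared to $V$), and then recovers $\chi$ and $V$ from it with a few extra bits. That absorbs the $\log n$ you were worried about into the $\log\binom{n-u}{v}$ term that was going to be spent anyway. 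Second, a single-shot encoding can only win a constant number of bits per unit of $|\chi(X,W)|$, so the Shannon converse would yield $\mathbb{E}[|\chi(X,W)|]=O(1)$, not $<\epsilon$. The paper instead reveals $W$ in roughly $\Theta(\log(k/\epsilon))$ chunks and proves, by induction on the number of chunks revealed, that each round multiplies $\mathbb{E}[|\chi(X,\cdot)|]$ by at most $2/3$; this geometric decay, and not a one-shot entropy comparison, is what produces the $\epsilon$. Your proposal as written neither sets up the joint $(V,X)$ encoding nor the iteration, so the argument does not reach the stated conclusion.
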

This lemma is of independent interest --- it is relevant to several applications in theoretical computer science \cite{Rossman14, LovettSZ19}. Before we prove Lemma \ref{lemma:code}, let us see how to use it to prove Lemma \ref{lemma:main}. 
%
%\begin{lemma}\label{lemma:code} There is a universal constant $\beta$ such that the following holds. 
%	If $S_1,\dotsc, S_\ell \subseteq [n]$ is an  $r$-spread sequence of sets of size $k$, with $ \ell = \lceil r^k \rceil$, $X \in [\ell]$ is uniformly random, and $W \subseteq [n]$ is an independent uniformly random set of size $\beta \cdot m \cdot  \lceil n/r \rceil$, then $\Expect{|\chi(X,W)|} \leq k/2^{m}$, where here $m$ is an arbitrary non-negative integer.
%\end{lemma}

\begin{proof}[Proof of Lemma \ref{lemma:main}]
	Set $\gamma = 1/(2p)$, $\epsilon = 1/p$. Then  $r = r(k,\gamma, \epsilon) = r(p,k)$.  Let $W_1, \dotsc, W_p$ be a uniformly random partition of $[n]$ into sets of size at least $\lfloor n/p \rfloor$. So, each set $W_i$ is of size at least $\lfloor n/p \rfloor \geq \gamma n$.  By symmetry and linearity of expectation, we can apply Lemma \ref{lemma:code} to conclude that 
	\begin{align*}
	\Ex{X,W_1, \dotsc, W_p}{|\chi(X,W_1)|+ \dotsb + |\chi(X,W_p)|}  = \Ex{X,W_1}{|\chi(X,W_1)|}+\dotsb+\Ex{X,W_p}{|\chi(X,W_p)|}< \epsilon p = 1.
	\end{align*}
	
	Since $|\chi(X,W_1)|+ \dotsb + |\chi(X,W_p)|$ is a non-negative integer, there must be some fixed partition  $W_1,\dotsc, W_p$ for which $$\Ex{X}{|\chi(X,W_1)|+ \dotsb + |\chi(X,W_p)|} = 0.$$ This can happen only if the sequence contains $p$ disjoint sets. 
\end{proof}

Next, we briefly describe a technical tool from information theory, before turning to prove Lemma \ref{lemma:code}.

%
%\begin{proof}
%Let $m = \lceil \log(k/\epsilon) \rceil$. Then we have $\beta \cdot m  \cdot \lceil n/r \rceil \leq \gamma n$, as long as  $\alpha$ is large enough. Applying Lemma \ref{lemma:code}, we conclude that
%\begin{align*}
%\Ex{X,W}{|\chi(X,W)|}\leq k  /2^{m} \leq \epsilon,
%\end{align*}
%as required. The statement about the probability follows from Markov's inequality.
%\end{proof}

\section{Prefix-free encodings}
A \emph{prefix-free} encoding is a map $E:[t]\rightarrow \{0,1\}^*$ into the set of all binary strings,  such that if $i \neq j$, $E(i)$ is not a prefix of $E(j)$. Another way to view such an encoding is as a map from the set $[t]$ to the vertices of the infinite binary tree. The encoding is prefix-free if $E(i)$ is never an ancestor of $E(j)$ in the tree.

Shannon \cite{Shannon48} proved that one can always find a prefix-free encoding such that the expected length of the encoding of a random variable $X \in [t]$ exceeds the entropy of $X$ by at most $1$. Conversely, every encoding must have average length that is at least as large as the entropy. For our purposes, we only need the converse under the uniform distribution. The proof is short, so we include it here. All logarithms are taken base $2$.

\begin{lemma}\label{lemma:shannon}
	Let $E:[t] \rightarrow \{0,1\}^*$ be any prefix-free encoding, and $\ell_i$ be the length of $E(i)$.  Then $(1/t) \cdot \sum_{i = 1}^t \ell_i \geq \log t$.
\end{lemma}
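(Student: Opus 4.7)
The plan is to derive the bound via Kraft's inequality followed by Jensen's inequality (concavity of $\log$), which is the standard route to the converse of Shannon's theorem under the uniform distribution.

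First I would establish Kraft's inequality: $\sum_{i=1}^t 2^{-\ell_i} \leq 1$. To see this, let $L = \max_i \ell_i$ and consider the complete binary tree of depth $L$, which has $2^L$ leaves. Each codeword $E(i)$ corresponds to an internal (or leaf) node at depth $\ell_i$, and the set of depth-$L$ descendants of that node has size exactly $2^{L-\ell_i}$. The prefix-free condition says no $E(i)$ is an ancestor of $E(j)$ for $i \neq j$, which implies the descendant sets at depth $L$ are pairwise disjoint. Summing their sizes gives $\sum_{i=1}^t 2^{L-\ell_i} \leq 2^L$, and dividing by $2^L$ yields the inequality.

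Next I would apply Jensen's inequality to the concave function $\log$ with the uniform distribution on $[t]$:
\begin{align*}
-\frac{1}{t}\sum_{i=1}^t \ell_i \;=\; \frac{1}{t}\sum_{i=1}^t \log\bigl(2^{-\ell_i}\bigr) \;\leq\; \log\!\left(\frac{1}{t}\sum_{i=1}^t 2^{-\ell_i}\right) \;\leq\; \log(1/t) \;=\; -\log t,
\end{align*}
using Kraft's inequality in the last step. Rearranging gives $(1/t)\sum_i \ell_i \geq \log t$, as desired.

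There is really no serious obstacle here; the only subtlety is making sure Kraft's inequality is stated for a general prefix-free code (not just for a code whose codewords are all leaves of some fixed-depth tree), which is why I would phrase the tree argument by extending each codeword to all of its depth-$L$ descendants. Everything else is a one-line application of concavity of $\log$.
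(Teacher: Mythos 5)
Your proposal is correct and takes essentially the same route as the paper: concavity of $\log$ (Jensen) combined with Kraft's inequality. The only cosmetic difference is that you prove Kraft's inequality by counting disjoint descendant sets in a depth-$L$ tree, while the paper phrases the identical argument probabilistically, as the disjoint events that a uniformly random length-$L$ string extends $E(i)$.
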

\begin{proof}
	We have
	\begin{align*}
	\log t - (1/t) \cdot \sum_{i=1}^t \ell_i  
	=(1/t) \cdot \sum_{i=1}^t \log (t \cdot 2^{-\ell_i}) 
	\leq  \log \Big(\sum_{i=1}^t 2^{-\ell_i}\Big),
	\end{align*}
	where the inequality follows from the concavity of the logarithm function. The fact that this last quantity is at most $0$  is known as Kraft's inequality \cite{Kraft49}. Consider picking a uniformly random binary string longer than all the encodings. Because the encodings are prefix-free, the probability that this random string contains the encoding of some element of $[t]$ as a prefix is exactly $\sum_{i =t}^t 2^{-\ell_i}$. So, this number is at most $1$, and the above expression is at most $0$.
\end{proof}

\section{Proof of Lemma \ref{lemma:code}}
Removing sets from the sequence can only increase $\Expect{|\chi(X,W)|}$, so without loss of generality, suppose $\ell = \lceil r^k\rceil$.  We shall prove that there is a constant $\kappa >1$ such that the following holds. For each integer $m$ with $0 \leq m \leq r\gamma/\kappa$, if $W$ is a uniformly random set of size at least $\kappa m n/r$,  then $\Expect{|\chi(X,W)|} \leq k \cdot (2/3)^m$. By the choice of $r(k,\gamma,\epsilon)$, setting $m = \lfloor r\gamma/\kappa \rfloor$, we get that when $W$ is a set of size at least $\gamma n$, $\Expect{|\chi(X,W)|} \leq  k \cdot (2/3)^{\lfloor\alpha \log (k/\epsilon) /\kappa\rfloor} < \epsilon$ for $\alpha>1$ chosen large enough.

We prove that  $\Expect{|\chi(X,W)|} \leq k \cdot (2/3)^m$  by induction on $m$.
When $m = 0$, the bound holds trivially. When $m>0$, sample $W = U \cup V$, where $U,V$ are uniformly random disjoint sets, $|U|= u=\lceil \kappa (m-1)  n/r \rceil$, and $|V| =  v\geq \kappa  n/r -1 \geq \kappa n/(2r)$. Note that we always have $\kappa/2 \leq (rv/n)$. Moreover, for $\alpha$ large enough, the sequence being $r$-spread implies that we must have $n/k >6$. Indeed, otherwise there would be at least $r^k \cdot (k/n) \geq r^k/6 > r^{k-1}$ sets that share some common element. In particular, we must have $n-u-k \geq n/3$, a bound that we use later. 

It is enough to prove that for all fixed choices of $U$, $$\Ex{V,X}{|\chi(X,W)|} \leq (2/3) \cdot   \Ex{X}{|\chi(X,U)|}.$$  So, fix $U$. If $\chi(x,U)$ is empty for any $x$, then we have $\Ex{V,X}{|\chi(X,W)|} =\Ex{X}{|\chi(X,U )|}=0$, so there is nothing to prove. Otherwise, we must have $\Ex{X}{|\chi(X,U)|} \geq 1$, since $ |\chi(x,U)| \geq 1$ for all  $x$. 
The number of possible pairs $(V,X)$ is at least $r^k \cdot \binom{n-u}{v}$. Our bound will follow from using Lemma \ref{lemma:shannon}. We give a prefix-free encoding of $(V,X)$ below. In each step, we bound the length of the encoding in terms of $|\chi(X,U)|$, $|\chi(X,W)|$ and $\log \Big(r^k \cdot \binom{n-u}{v} \Big)$. In fact, we shall give a prefix-free encoding of $(V,X)$ where every pair will be encoded using 
$$\log \Big(r^k \cdot \binom{n-u}{v} \Big) + a \cdot |\chi(X,U)|  - b \cdot |\chi(X,W)|$$ bits,
for some $a,b>0$, with $a/b \leq 2/3$. Applying Lemma \ref{lemma:shannon}, we conclude that the expected length of the encoding must satisfy:
$$\log \Big(r^k \cdot \binom{n-u}{v} \Big) + a \cdot \Ex{X}{|\chi(X,U)|}  - b \cdot \Ex{X,V}{|\chi(X,W)|} \geq \log \Big(r^k \cdot \binom{n-u}{v} \Big),$$ 
and so $$\Ex{X,V}{|\chi(X,W)|} \leq (2/3)  \cdot \Ex{X}{|\chi(X,U)|}.$$

\begin{figure}[t]
	\centering
	\includegraphics[width = 0.5 \textwidth]{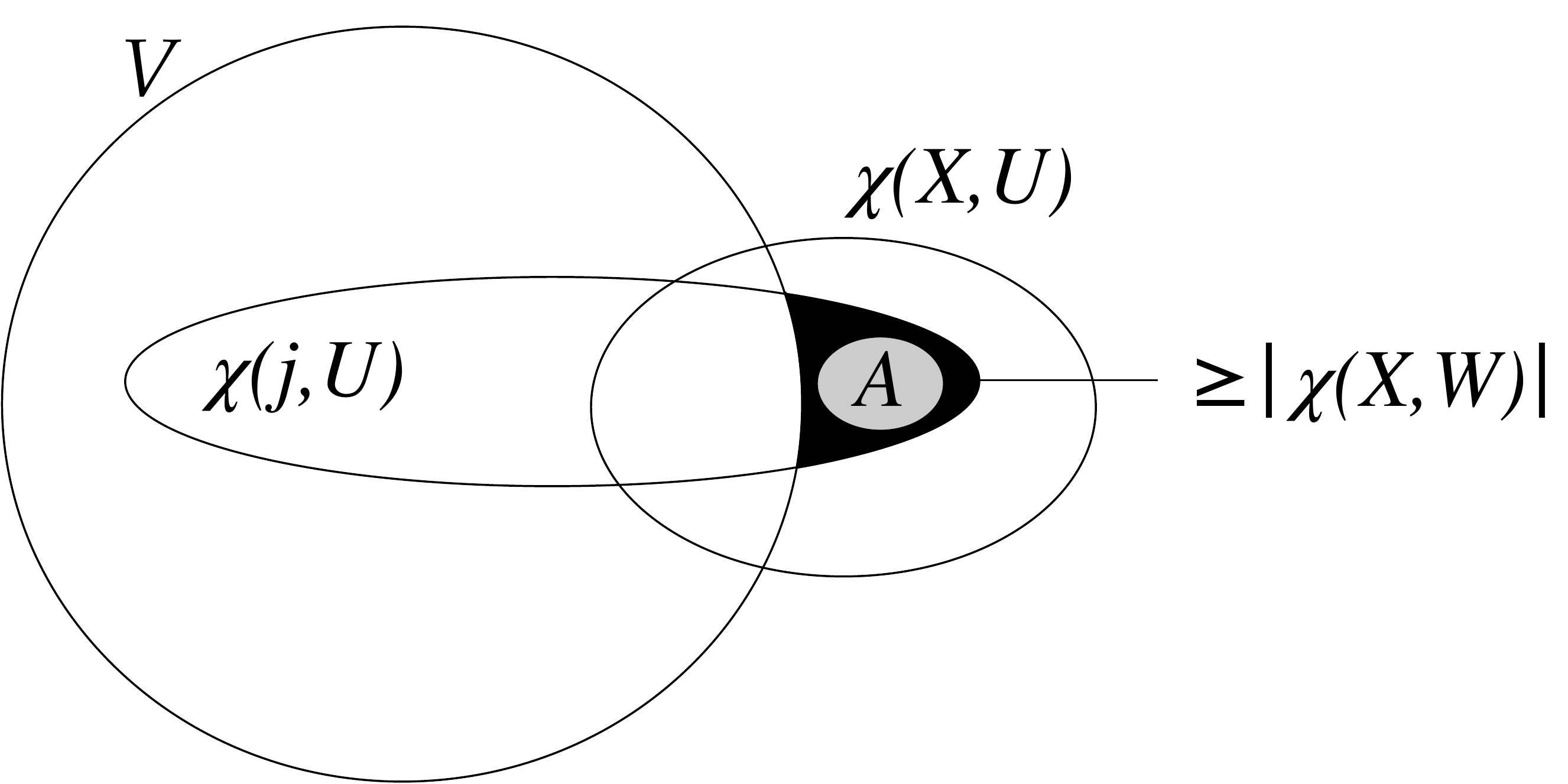}
	\caption{In the first case, given $A$ and $V \cup \chi(X,U)$, the number of candidates for $\chi(X,U)$ is at most $\phi(X,V)$.}
\end{figure}

To describe the encoding, for each $A \subseteq \chi(X,U)$, with  $|A| = |\chi(X,W)|$, define  $$\tau(A,X,V) =	 \{y \in [\ell]: A \subseteq \chi(y,U)\subseteq V \cup \chi(X,U), |\chi(y,U)| = |\chi(X,U)| \},$$ and for $\rho$ a large constant to be set later, define $$\phi(X,V)=r^{k} \cdot  (\rho v/n)^{|\chi(X,U)|} \cdot (vr/n)^{- |\chi(X,W)|}.$$ 

\begin{enumerate}
	\item  The first case is that for all $A$ as above,  $|\tau(A,X,V)| \leq \phi(X,V)$. Then the first bit of the encoding is set to $0$, and we proceed to encode $(V,X)$ like this: 
	\begin{enumerate}
		\item Encode $|\chi(X,U)|$. It suffices to use a trivial   encoding of this integer: we encode it with the string $0^{|\chi(X,U)|}1$, which has length $|\chi(X,U)|+1$.
		\item Encode $W \cup \chi(X,U)$. Since $U$ has been fixed, there are $$\binom{n-u}{v}+ \dotsb + \binom{n-u}{v + |\chi(X,U)|} \leq \binom{n-u+|\chi(X,U)|}{v+|\chi(X,U)|} \leq \binom{n-u}{v} \cdot (n/v)^{|\chi(X,U)|}$$ choices for this set. So, the encoding has length at most $$\log \Big( \binom{n-u}{v}  \cdot  (n/v)^{ |\chi(X,U)|}\Big)  +1.$$ 
		\item Let $j$ be such that $\chi(j, U) \subseteq W \cup \chi(X, U)$, and  $|\chi(j, U)|$ is minimized. If there are multiple choices for $j$ that achieve the minimum, let $j$ be the smallest one. $X$ is a potential candidate for $j$, so we must have $|\chi(j, U)| \leq |\chi(X,U)|$. Encode $\chi(X, U) \cap \chi(j, U)$. Since $j$ is determined, this takes at most  $|\chi(X, U)|$ bits.
		\item We have already encoded $\chi(j, U) \cap \chi(X, U) \subseteq S_X$. We claim that this set must have size at least $|\chi(X,W)|$.  Indeed,  $\chi(j,U) = S_h \setminus U$ for some set $S_h$ of the $r$-spread sequence. We have 
		$$S_h \setminus U = \chi(j,U) \subseteq \chi(X,U) \cup W,$$ so $$S_h \subseteq \chi(X,U) \cup W  \subseteq S_X \cup W.$$ By the definition of $\chi(X,W)$, this implies that  $$|\chi(X,W)| \leq |S_h \setminus W| = |S_h \setminus W \cap \chi(X,U) \setminus W| \leq |\chi(j,U) \cap \chi (X,U)|,$$ as claimed. Let $A$ be the lexicographically first subset of $\chi(j,U) \cap \chi(X,U)$ of size $|\chi(X,W)|$. Now, since $|\tau(A,X,V)| \leq \phi(X,V)$ for all $A$ of size $|\chi(X,W)|$, we can encode $X$ using a binary string of length at most $$\log(\phi(X,V))+1 = \log \Big(r^k \cdot  (\rho v/n)^{|\chi(X,U)|} \cdot (vr/n)^{-|\chi(X,W)|}\Big)+ 1 .$$
		\item Because $X$ has been encoded, $\chi(X,U)$ is also determined. Encode $W \cap \chi(X,U)$. Together with $W \cup \chi(X,U)$, this determines $W$, and so $V$. This last step takes $|\chi(X,U)|$ bits. 
	\end{enumerate}
	Combining all of the above steps, and using the fact that $|\chi(X,U)| \geq 1$, and $vr/n \geq \kappa/2$,  the total length of the encoding in this case is at most 
	\begin{align*}
	&\log \Big(r^k \cdot \binom{n-u}{v} \Big)  +(c + \log(\rho))\cdot |\chi(X,U)|  - \log(\kappa/2) \cdot  |\chi(X,W)|,
	\end{align*}
	where here $c$ is some constant. 
	
	\begin{figure}[t]
		\centering
		\includegraphics[width = 0.4 \textwidth]{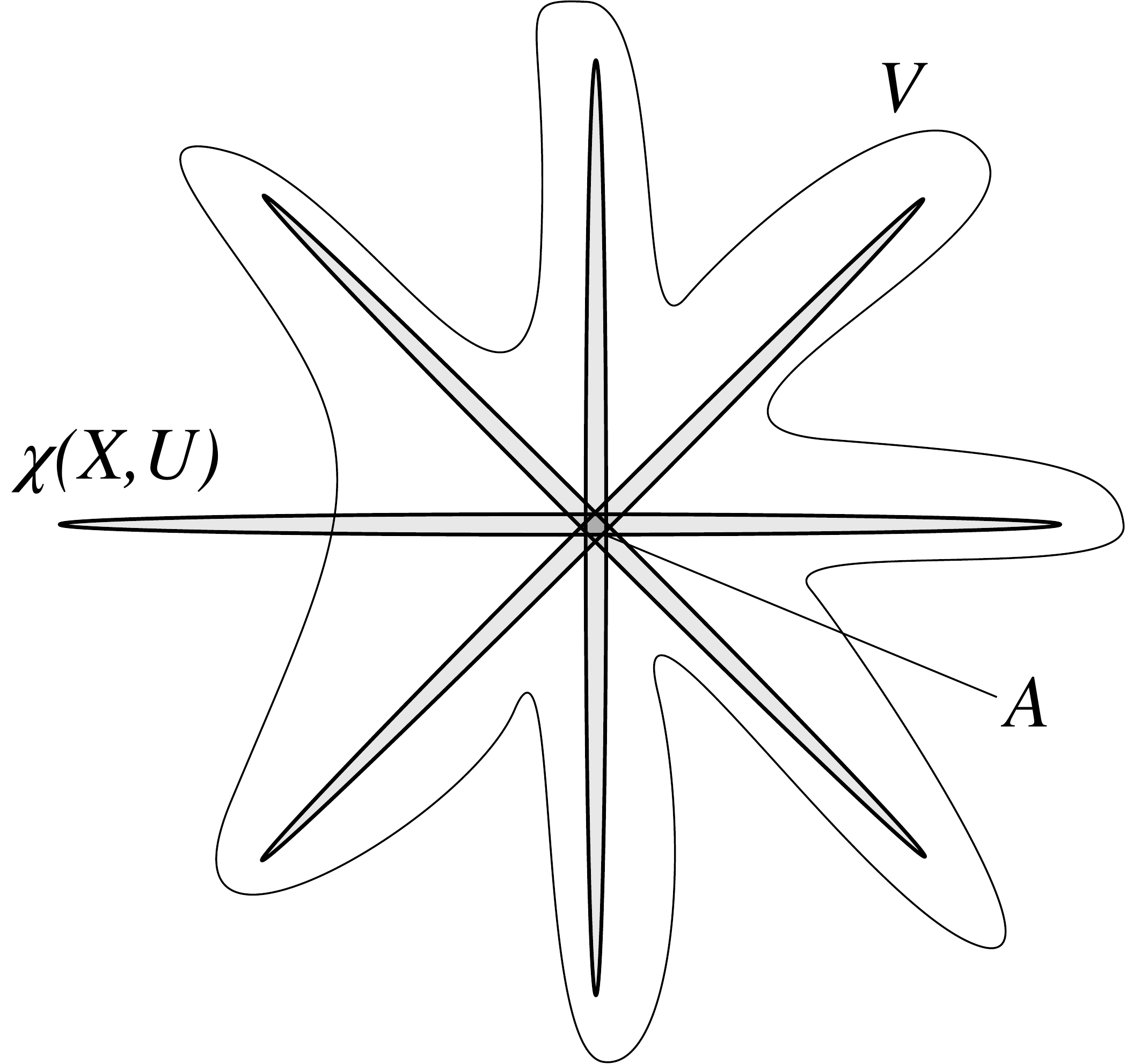}
		\caption{In the second case, given $A$ and $X$, the number of candidates for $V$ is small because $V$ must include an unusually large number of sets of the form $\chi(y,U)\setminus \chi(X,U)$.}
	\end{figure}
	
	\item In the second case, there is a set $A \subseteq \chi(X,U)$ of size $|\chi(X,W)|$ such that  $|\tau(A,X,V)|> \phi(X,V)$.  Then the first bit of the encoding is set to $1$, and we proceed like this:
	\begin{enumerate}
		\item Encode $X$. This takes at most $\log r^k + 1$ bits, since $\ell = \lceil r^k \rceil$.
		\item Now $\chi(X,U)$ is determined. Encode the set $A$ promised above. This takes at most $|\chi(X,U)|$ bits.
		\item Now $\phi(X,V)$ is determined, since $A$ is of size $|\chi(X,W)|$. We claim that the previous steps have reduced the number of candidates for $V$ to at most $\binom{n-u}{v} \cdot (6/\rho)^{|\chi(X,U)|}$. Indeed, consider the following random experiment. Choose a set $B$ uniformly at random from the collection of sets satisfying $A \subseteq B \subseteq \chi(X,U)$, and then sample $V \subseteq [n] \setminus U$ uniformly at random. Consider the collection of $y \in \tau(A,X,V)$ for which $B = \chi(y,U) \cap \chi(X,U)$. Define $$N(A,B,X,V) = |\{y \in [\ell]: B=\chi(y,U) \cap \chi(X,U), y \in \tau(A,X,V)\}|.$$ We have that for the fixed value of $A,X$ specified previously,
		\begin{align*}
		&\Ex{B,V}{N(A,B,X,V)} 
		\leq \Ex{B}{r^{k-|B|} \cdot \Big(\frac{v}{n-u-k}\Big)^{|\chi(X,U)| -|B|}}.
		\end{align*}
		This is because the sequence of sets is $r$-spread, so there are at most $r^{k-|B|}$ sets of the form $\chi(y,U)$ and of size $|\chi(X,U)|$ that intersect $\chi(X,U)$ in  $B$. For each such set, $V$ includes $\chi(y,U) \setminus \chi(X,U) = \chi(y,U) \setminus B$ with probability at most $(v/(n-u-k))^{|\chi(X,U)| -|B|}$. By the choice of $u$, we have $n-u-k \geq n/3$. So, we continue to bound:
		\begin{align*}
		&\leq \Ex{B}{r^{k-|B|} \cdot \Big(\frac{3v}{n}\Big)^{|\chi(X,U)| -|B|}} \leq r^{k} \cdot (3v/n)^{|\chi(X,U)|} \cdot (vr/n)^{-|\chi(X,W)|}.
		\end{align*}
		The last inequality holds because $|B| \geq |\chi(X,W)|$. On the other hand, we have $$\Pr_V[|\tau(A,X,V)| > \phi(X,V)] \cdot 2^{-|\chi(X,U)|} \cdot \phi(X,V) \leq \Ex{B,V}{N(A,B,X,V)},$$
		since $B$ takes each value with probability at least $2^{-|\chi(X,U)|}$.  By the definition of $\phi(X,V)$, this last inequality can be rewritten as $$\Pr_V[|\tau(A,X,V)|>\phi(X,V)] \leq  (6/\rho)^{|\chi(X,U)|}.$$ So, we can encode $V$ at a cost of  $$\log \binom{n-u}{v} +  \log(6/\rho) \cdot |\chi(X,U)|+1.$$ 
	\end{enumerate}
	Thus, for some constant $c'$, the cost of carrying out the encoding in the second case is at most: 
	\begin{align*}
	&\log \Big(r^k \binom{n-u}{v} \Big) + (\log (1/\rho) + c')\cdot |\chi(X,U)| \\
	&\leq \log \Big(r^k \binom{n-u}{v} \Big) + (\log(\rho)+c) \cdot |\chi(X,U)|  - (2\log(\rho) + c - c')  \cdot |\chi(X,W)|,
	\end{align*}
 where the last inequality was obtained by adding $(2\log \rho +c - c') \cdot (|\chi(X,U)| - |\chi(X,W)|)$, which is non-negative for $\rho$ chosen large enough.
\end{enumerate}
	Set $\rho$ to be large enough so that $(\log(\rho)+c)/(2 \log \rho + c-c') \leq 2/3$,  and $\kappa$ to be large enough so that $\log(\kappa/2) \geq (2 \log (\rho) + c - c')$ to complete the proof. 
\section*{Acknowledgments} %%  you may comment this out if no Ackno
Thanks to Ryan Alweiss, Shachar Lovett, Kewen Wu and Jiapeng Zhang for many useful comments. Thanks to Sivaramakrishnan Natarajan Ramamoorthy, Siddharth Iyer and Paul Beame for useful conversations. Thanks to the editor and reviewers of Discrete Analysis for insightful comments.

%%% AUTHOR:
%%% Bibliography goes here. Note that the arXiv cannot process bibtex
%%% or biber bibliographies.  Example of acceptable bibliograpy format:
\bibliographystyle{amsplain}

%% AUTHOR: You can generate such a bibliography from a .bib file by 
%% running pdflatex/bibtex/pdflatex/pdflatex and then pasting the .bbl file
%% between \begin{thebibliography} and \end{bibliography}

%%% AUTHOR: Include a short description of each author following the
%%% structure below. Use the same short tags used previously.  
%%% Use \imageat{} and \imagedot{} instead of "@" and "." in
%%% email addresses-this replaces the symbols with graphics to avoid 
%%% e-mail address harvesting from the .pdf file
\begin{dajauthors}
\begin{authorinfo}[arao]
  Anup Rao\\
  University of Washington\\
  Seattle, Washington, USA\\
  anuprao\imageat{}cs\imagedot{}washington\imagedot{}edu \\
  \url{https://homes.cs.washington.edu/~anuprao/}
\end{authorinfo}
\end{dajauthors}

\end{document}